\author{Ben Andrews}
\address{School of Mathematics and Statistics, University of Sheffield, Sheffield S3 7RH}
\email{benand34@gmail.com}
\author{Jonathan Jordan}
\address{School of Mathematics and Statistics, University of Sheffield, Sheffield S3 7RH}
\email{jonathan.jordan@sheffield.ac.uk}
\title{Fragility of non-convergence in preferential attachment graphs with three types}
\newtheorem{theorem}{Theorem}[section]
\newtheorem{proposition}[theorem]{Proposition}
\keywords{preferential attachment, competing types, rock-paper-scissors} \subjclass[2020]{05C82}
\begin{document}

\maketitle

\begin{abstract}
Preferential attachment networks are a type of random network where new nodes are connected to existing ones at random, and are more likely to connect to those that already have many connections.  We investigate further a family of models introduced by Antunovi\'{c}, Mossel and R\'{a}cz where each vertex in a preferential attachment graph is assigned a type, based on the types of its neighbours. Instances of this type of process where the proportions of each type present do not converge over time seem to be rare.

Previous work found that a ``rock-paper-scissors" setup where each new node's type was determined by a rock-paper-scissors contest between its two neighbours does not converge. Here, two cases similar to that are considered, one which is like the above but with an arbitrarily small chance of picking a random type and one where there are four neighbours which perform a knockout tournament to determine the new type.

These two new setups, despite seeming very similar to the rock-paper-scissors model, do in fact converge, perhaps surprisingly.
\end{abstract}

\section{Introduction}
In this paper, we consider a model for randomly growing networks that have nodes of different types, where the types of nodes are chosen based on what they see connected to them when they join the network. These types could represent, for example, brand preferences, where people choose their preference based on those of their friends or those of celebrities.

The model we consider is based on the linear preferential attachment graph, where nodes are more likely to connect to those that already have a lot of connections, similar to the influence of celebrities in the example above. The type assignment model on preferential attachment graphs was introduced by Antunovi\'c, Mossel and R\'acz in \cite{amr}; the general set-up provides for $N$ types and a flexible family of type assignment rules based on the types of neighbouring vertices. They proved a strong result for the case with two types, that the proportion of each type present over time almost surely converges to a limit, which is a fixed point of a one-dimensional differential equation and, depending on the choice of type assignment mechanism, may be random.

They also conjecture (Conjecture 3.2 of \cite{amr}) that a similar result is true for three or more types. However, in previous work \cite{hj}, one of the co-authors showed that this is not true for a “rock-paper-scissors” case, where each node is connected to two others and the type of the new node is chosen by the winner of a rock-paper-scissors contest between its two neighbours. It seems, with three types at least, that these exceptions are unusual and special, and most natural cases do converge almost surely.

Here, we consider some variations on the rock-paper-scissors model, mainly on one which is very similar, but with a small probability $h$ of taking a random type, rather than performing the rock-paper-scissors process (this can be considered to be a very small perturbation of the rock-paper-scissors model). Indeed, this model does converge almost surely to one third of the nodes present being each type. We will also consider a model where new nodes receive four neighbours and these four types perform a “knockout tournament” to decide the type of the new node. The equivalent case with $m=2$ is the original rock-paper-scissors case, but in the $m=4$ case, this model also converges almost surely.

\section{The Antunovi\'{c}-Mossel-R\'acz framework}\label{framework}
The framework introduced by Antunovi\'{c}, Mossel and R\'acz in \cite{amr} considers a standard
preferential attachment graph where the new vertex connects to $m$ existing vertices.  Preferential attachment as a network model was popularised by Barab\'{s}i and Albert \cite{scalefree1999}, and a rigorous mathematical formulation followed in \cite{brst}.  The specific version of preferential attachment used in \cite{amr} and in the present paper is the ``independent model'' of \cite{bbcs}.  The initial graph is called $G_0$, and then for every $t \in \mathbb{N}$ a new vertex is connected to $m$ vertices in $G_{t-1}$ (allowing multiple edges) where the probability of being connected to each other vertex is proportional to its degree, and the $m$ vertices are chosen independently; this gives $G_t$.

For the framework of \cite{amr}, each vertex is one of $N$ types (types notated ${1, …, N}$) and a vertex receives a type when it joins the network; this type never changes. The type of a new vertex is determined by the types of all its neighbours.  To define the type assignment rule, for each vector $\mathbf{u}$ of length $N$ with elements summing to $m$, we define a vector $\mathbf{p_u}$, also of length $n$ and giving a probability distribution on $\{1, …, N\}$.  If the number of each type in the new vertex’s neighbours is given by $\mathbf{u}$ then the probabilities of each type for the new vertex are given by $\mathbf{p_u}$.  We will generally assume that each type is present in the initial graph $G_0$, though this is not necessary in all examples.

A simple example is where $\mathbf{p_u} = \mathbf{u}/m$; this is known as the linear model, and has special properties.  For more general models, in \cite{amr}, Antunovi\'{c}, Mossel and R\'acz demonstrate that the sequence of vectors $\mathbf{x}_n$ which give the proportions of degrees of each type is a stochastic approximation process, meaning that we can write
$$\mathbf{x}_{n+1}-\mathbf{x}_n=\frac{1}{n}(P\left(\mathbf{x}_n)+\xi_{n+1}+R_n\right).$$  Here $P$ is an $N-1$-dimensional vector field $P$ which depends on the $\mathbf{p_u}$, and, letting $(\mathcal{F}_n)_{n\in\mathbb{N}}$ be the natural filtration of the process, $\mathbb{E}(\xi_{n+1}|\mathcal{F}_n)=0$ and $R_n$ is $\mathcal{F}_n$-measurable and satisfies $R_n\to 0$ and $\sum_{n=1}^{\infty}\frac{|R_n|}{n}$ is finite almost surely.  This means that we can apply standard results on stochastic approximation, as given for example in Pemantle \cite{Pemantle}, and to do this analysis of the vector field $P$ is key to understanding the behaviour of these models.  When $N=2$, a full analysis is given in \cite{amr}, showing that the proportions of each type converge to a stationary point of $P$, but when $N>2$ it is hard to give a general analysis of $P$ due to the variety of behaviour of higher dimensional dynamical systems; the relationship between stochastic approximation and dynamical systems, giving an idea of the complications which can arise, is covered in detail in Bena\"{i}m \cite{benaim}.

In \cite{hj}, Haslegrave and Jordan considered a type assignment system with $N=3$ types, labelled “rock”, “paper” and “scissors”, and $m=2$.  The type of a new node is determined by a rock-paper-scissors competition between the types of its two neighbours, so that the winner becomes the type of the new node. If both neighbours are the same, the new node takes their type.  In the notation above, we have $$p_{(2,0,0)}=(1,0,0), p_{(0,2,0)}=(0,1,0),p_{(0,0,2)}=(0,0,1);$$
$$p_{(1,1,0)}=(0,1,0),p_{(1,0,1)}=(1,0,0), p_{(0,1,1)}=(0,0,1).$$  The results of \cite{hj} showed that in this model the proportions of the types did not converge and instead cycled.

\section{Small perturbation case}\label{perturb_sect}
In this section, we consider a small perturbation case of the rock-paper-scissors model described above.  In this perturbation case, there is a small probability $h$ (any $h<1$ can be used) of ignoring the result of the above process, and the new node just taking a new type at random, and thus a probability $1 - h$ of the new type being selected using the original rock-paper-scissors method.

In this way, for a small $h$, the process can be very close to that of the original rock-paper-scissors model, and the perturbation can be arbitrarily small as $h$ gets very close to zero. Note that the case with $h=0$ is the original model of \cite{hj}.

Define $k = h/3$. In the notation from section \ref{framework}, for this model we have

$$p_{(1,1,0)} = (k, 1-2k, k), p(0,1,1) = (k, k, 1-2k),$$
$$p_{(1,0,1)} = (1-2k, k, k), p(2,0,0) = (1-2k, k, k),$$
$$p_{(0,2,0)} = (k, 1-2k, k), p(0,0,2) = (k, k, 1-2k).$$
Let $A_n$, $B_n$ and $C_n$ denote the normalised proportions of types 1, 2 and 3 respectively in $G_n$.

Simulation results suggest that for this model, it may be that the proportions of each type do not behave as they do in the original model, but instead may converge to $(1/3, 1/3, 1/3)$. Figure \ref{fig:1} shows the results of a simulation with $h=0.05$.

\begin{figure}
  \includegraphics[scale=0.7]{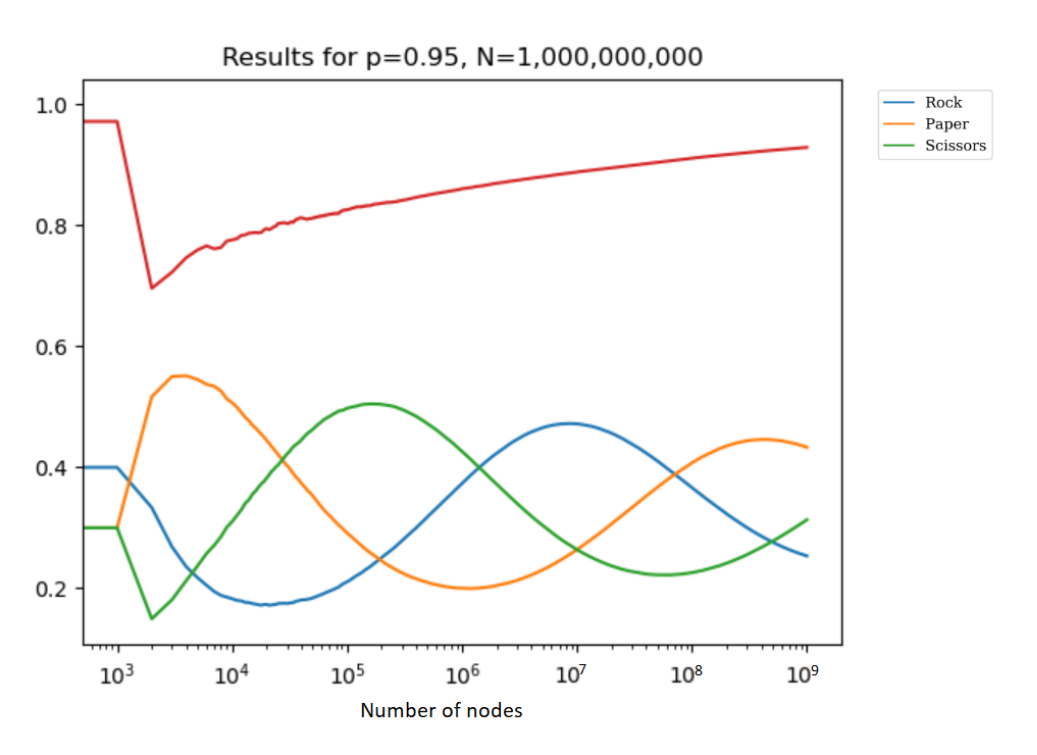}
  \caption{Results for a simulation of the perturbation model with $h=0.05$. Here, $G_0$ = a complete graph with three of each type. The red line shows the value of the product $27X_nY_nZ_n$. }
  \label{fig:1}
\end{figure}

From Figure \ref{fig:1}, and comparing with the original model \cite{hj} it seems that this model likely converges. Let $X_n, Y_n, Z_n$ denote the (normalised) sums of degrees of the nodes of types 1, 2 and 3 respectively in $G_n$, and define the product $M_n=X_nY_nZ_n$; from the figure, this appears to be increasing and converging to $1$, where in the original model it converges to a random limit. The oscillations of the proportions are also getting smaller each time here, which suggests they may eventually all converge to $1/3$.

This motivates the main result of this section:

\begin{theorem}\label{perturb_thm}
For the perturbation model with any $h > 0$, $(A_n, B_n, C_n)$ and $(X_n, Y_n, Z_n)$ converge almost surely to $(1/3, 1/3, 1/3)$.
\end{theorem}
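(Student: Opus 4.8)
The plan is to use the stochastic approximation structure of Section \ref{framework}, identifying $M_n=X_nY_nZ_n$ as a strict Lyapunov function for the mean-field vector field $P$. First I would compute $P$. Each of the $m=2$ neighbours of a new vertex is type $i$ independently with probability equal to the current degree-proportion, so summing the entries of the table of $\mathbf{p_u}$ against the binomial weights gives the probability $q_i$ that the new vertex is type $i$; one finds $q_1=(1-h)(X^2+2XZ)+\frac h3$, with $q_2,q_3$ obtained cyclically, the factor $1-h$ and additive $\frac h3$ encoding the perturbation. Tracking how the degree of each type changes when a vertex is added then shows that the $P$-component of the drift of $X_i$ is a fixed positive multiple $c$ of $q_i-X_i$.

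The decisive computation is the derivative of $\log M=\log X+\log Y+\log Z$ along $P$. Writing $\tilde q_1=X^2+2XZ$ and cyclically, the identity $\frac{\tilde q_1}{X}+\frac{\tilde q_2}{Y}+\frac{\tilde q_3}{Z}=3(X+Y+Z)=3$ shows that the rock-paper-scissors part contributes nothing on average, leaving
$$\left\langle\nabla\log M,\,P\right\rangle=ch\left(\tfrac13\Big(\tfrac1X+\tfrac1Y+\tfrac1Z\Big)-3\right),$$
which by the arithmetic-harmonic mean inequality is nonnegative, and is zero only at the centre $(1/3,1/3,1/3)$. Equivalently the drift of $M$ itself is $g(\mathbf x)=\frac{ch}{3}\big((XY+YZ+ZX)-9XYZ\big)\ge 0$. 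Thus for every $h>0$ the bounded process $M_n\in[0,1/27]$ is, up to its martingale and remainder terms, a submartingale with strictly positive drift away from the centre, which is exactly the behaviour of the red curve $27M_n$ in Figure \ref{fig:1}.

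From here I would proceed as follows. Since $M_n$ is bounded with increments of order $1/n$, the martingale part is square-summable and the remainder summable, so $M_n$ converges almost surely and the submartingale decomposition yields $\sum_n\frac1n g(\mathbf x_n)<\infty$ almost surely; as $g\ge 0$ this forces $g(\mathbf x_{n_k})\to 0$ along some subsequence. In the closed simplex the zero set of $g$ is exactly the centre together with the three vertices, so $\mathbf x_{n_k}$ approaches one of these. At a vertex, say $(1,0,0)$, one has $q_1-X=(1-\tfrac{2h}{3})-1=-\tfrac{2h}{3}<0$, and checking each boundary face shows $P$ points strictly into the interior everywhere on the boundary (equivalently $\langle\nabla\log M,P\rangle\to+\infty$ as $\mathbf x\to\partial$). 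Hence the vertices are not equilibria and the boundary is repelling, so the subsequential limit cannot be a vertex; therefore $M_\infty=1/27$, and since the centre is the unique maximiser of $M$ this gives $(X_n,Y_n,Z_n)\to(1/3,1/3,1/3)$ almost surely.

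Finally, for the type counts, a new vertex is type $i$ with probability $q_i(\mathbf x_n)\to\frac13$, so $(A_n,B_n,C_n)$ is itself a stochastic approximation whose drift converges to that of the globally stable one-dimensional flow $\dot a=\frac13-a$, giving $(A_n,B_n,C_n)\to(1/3,1/3,1/3)$ as well. The main obstacle I anticipate is the boundary step: because $\log M$ is unbounded, turning the qualitative repulsion of $\partial$ into a rigorous confinement estimate that excludes $M_\infty=0$ requires care, and this is where the blow-up of the drift near the boundary, together with standard nonconvergence-to-repellers results for stochastic approximation (as in \cite{Pemantle}), must be invoked.
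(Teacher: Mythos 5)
Your proposal is correct in substance and follows the paper's overall strategy --- exhibit $\Lambda(x,y,z)=-xyz$ as a Lyapunov function for the Antunovi\'c--Mossel--R\'acz vector field $P$ and conclude by stochastic approximation --- but your verification of the Lyapunov property takes a genuinely different and slicker route. The paper computes $\frac{d(xyz)}{dt}=\frac{k}{2}\bigl(x^2(1-x)+y^2(1-y)+z^2(1-z)-6xyz\bigr)$, substitutes $z=1-x-y$, and classifies all critical points of the resulting polynomial by the second-derivative test (a local minimum at the centre, saddles at $(1/9,1/9,7/9)$ and its permutations), then checks the boundary separately; your identity $\tilde q_1/X+\tilde q_2/Y+\tilde q_3/Z=3$, which makes the unperturbed rock-paper-scissors part contribute exactly zero to $\langle\nabla\log M,P\rangle$ and leaves a pure AM--HM inequality, reaches the same conclusion in two lines and makes transparent why positivity holds for every $h>0$. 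Your expressions do match the paper's: with $c=\frac12$ (which is the correct drift constant, since $P_i=\frac12(q_i-x_i)$) your $g=\frac{ch}{3}\bigl((xy+yz+zx)-9xyz\bigr)$ equals $\frac{k}{2}\bigl(x^2(1-x)+y^2(1-y)+z^2(1-z)-6xyz\bigr)$ identically on the simplex. Where you diverge is the endgame: the paper simply invokes Proposition 2.18 of Pemantle, whereas you re-derive convergence by hand from the submartingale decomposition of $M_n$ and flag the exclusion of $M_\infty=0$ as the delicate remaining step. In fact that obstacle largely dissolves given your own computation: since $g\ge 0$ on the entire \emph{closed} simplex with zero set exactly $\{(1/3,1/3,1/3)\}$ together with the three vertices, the open boundary edges are not in the contact set $\{\langle\nabla\Lambda,P\rangle=0\}$, so Pemantle's Lyapunov result directly yields convergence to one of these four isolated points --- no confinement estimate and no use of the blow-up of $\nabla\log M$ near $\partial\Delta^2$ is needed; the vertices are then ruled out not by a repeller argument but by the elementary fact that a stochastic approximation with this noise structure cannot converge to a non-equilibrium of $P$, and $P_1(1,0,0)=-h/3\ne 0$ as you observed (also the reason one should phrase the exclusion in terms of full limits rather than ``subsequential limits cannot be a vertex,'' which as stated does not follow from the vertices being non-equilibria). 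Finally, your explicit one-dimensional argument passing from $q_i(\mathbf{x}_n)\to\frac13$ to $(A_n,B_n,C_n)\to(1/3,1/3,1/3)$ is sound and is actually spelled out more carefully than in the paper, which leaves that step implicit.
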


This will follow from Proposition \ref{perturb_prop} later.

For this model, the vector field $P$, defined by (3.1) of \cite{amr}, on the triangle $\Delta^2$ is given by the components
$$P_1(x, y, z) = \frac{x}{2} (z-y)+y(x+z)k-x(x+2z)k+\frac{1}{2} (y^2+z^2 )k,$$
$$P_2(x, y, z) = \frac{y}{2} (x-z)+z(x+y)k-y(2x+y)k+\frac{1}{2} (x^2+z^2 )k,$$
$$P_3(x, y, z) = \frac{z}{2} (y-x)+x(y+z)k-z(2y+z)k+\frac{1}{2} (x^2+y^2 )k.$$

The following result tells us that $\Lambda(x,y,z)=-xyz$ is a Lyapunov function for this vector field.  Because $(X_n, Y_n, Z_n)$ is a stochastic approximation process, standard results on stochastic approximation with a Lyapunov function (for example in Pemantle \cite{Pemantle}) will allow us to use it to conclude Theorem \ref{perturb_prop}.

\begin{proposition}\label{perturb_prop}
The product $xyz$ is constant on the trajectories of $P$ only when $(x,y,z) = (1/3,1/3,1/3)$.
Otherwise, it is strictly increasing on said trajectories.
\end{proposition}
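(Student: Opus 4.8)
The plan is to read the claim as a statement about the time derivative of $xyz$ along the flow of $P$. Writing a trajectory as $(x(t),y(t),z(t))$ with $\dot x = P_1$, $\dot y = P_2$, $\dot z = P_3$, the product rule gives
$$\frac{d}{dt}(xyz) = yz\,P_1 + xz\,P_2 + xy\,P_3 =: f(x,y,z),$$
so it suffices to show $f \ge 0$ on $\Delta^2$, with $f = 0$ only at the centre and the three vertices. First I would split each $P_i$ into its rock-paper-scissors part (the terms free of $k$) and its perturbation part, letting $Q_i$ be the coefficient of $k$ in $P_i$. The rock-paper-scissors contribution to $f$ is $\tfrac{xyz}{2}\big[(z-y)+(x-z)+(y-x)\big]=0$, which encodes exactly the fact that $xyz$ is conserved in the unperturbed model of \cite{hj}. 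Hence $f = k\,g$ with $g = yz\,Q_1 + xz\,Q_2 + xy\,Q_3$, and since $k = h/3 > 0$ the sign of $f$ is the sign of $g$.

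The heart of the argument is a single polynomial identity. Expanding $g$ and collecting monomials — using the cyclic symmetry $Q_2(x,y,z)=Q_1(y,z,x)$, $Q_3(x,y,z)=Q_1(z,x,y)$ to cut the work — I expect $g$ to collapse to the fully symmetric degree-four form
$$g = \big(x^2y^2+y^2z^2+z^2x^2\big) + \tfrac12\big(x^3y+x^3z+y^3x+y^3z+z^3x+z^3y\big) - 2xyz(x+y+z).$$
The key step is then to recognise this as a sum of squares valid on the positive orthant, namely
$$g = \tfrac12\big[xy(x-y)^2 + yz(y-z)^2 + zx(z-x)^2\big] + \big[z^2(x-y)^2 + x^2(y-z)^2 + y^2(z-x)^2\big].$$
Both bracketed expressions are manifestly nonnegative for $x,y,z \ge 0$, giving $g \ge 0$ on $\Delta^2$. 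Verifying the identity is a routine expansion of the two brackets and matching of coefficients against the symmetric form above.

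Finally I would read off the equality case. The two brackets vanish together only where $z^2(x-y)^2 = x^2(y-z)^2 = y^2(z-x)^2 = 0$: in the interior of $\Delta^2$ this forces $x=y=z=\tfrac13$, while on the boundary it forces two coordinates to vanish, i.e.\ the three vertices, so $\{g=0\}\cap\Delta^2$ is the finite set consisting of the centre and the vertices. A direct substitution shows $P \ne 0$ at each vertex (for example $P=(-k,\tfrac{k}{2},\tfrac{k}{2})$ at $(1,0,0)$, pointing into the interior), so the centre is the only stationary point among these; consequently any non-constant trajectory meets $\{g=0\}$ in at most a discrete set of times, whence $\tfrac{d}{dt}(xyz)=k g>0$ for all but discretely many $t$ and $xyz$ is strictly increasing, while the constant trajectory at the centre keeps it fixed. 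The main obstacle is guessing the sum-of-squares decomposition; the bookkeeping in the expansion of $g$ is the only delicate part, and once the identity is in hand both the nonnegativity and the equality analysis are immediate.
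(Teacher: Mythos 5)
Your proposal is correct --- I checked both the polynomial identity and the sum-of-squares decomposition, and both hold --- but it takes a genuinely different route from the paper's. The paper computes the same derivative $\frac{d}{dt}(xyz)=yzP_1+xzP_2+xyP_3$, simplifies it using $x+y+z=1$ to $\frac{k}{2}\left(x^2(1-x)+y^2(1-y)+z^2(1-z)-6xyz\right)$, substitutes $z=1-x-y$, and then runs a two-variable calculus classification: the stationary points of the resulting polynomial are $(1/3,1/3,1/3)$ (a local minimum by the second-derivative test) together with three saddle points at $(1/9,1/9,7/9)$ and its permutations, after which nonnegativity on the boundary of $\Delta^2$ upgrades the local minimum to a global minimum with value $0$. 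You instead exploit structure: the unperturbed rock-paper-scissors field conserves $xyz$ exactly (your telescoping $\tfrac{xyz}{2}\left[(z-y)+(x-z)+(y-x)\right]=0$, which nicely explains the non-convergence in \cite{hj}), so the derivative equals $k$ times a symmetric homogeneous quartic $g$, and your sum-of-squares identity is valid: the first bracket expands to $\tfrac12\sum_{\mathrm{sym}}x^3y-\sum x^2y^2$ and the second to $2\sum x^2y^2-2xyz(x+y+z)$, summing exactly to $g$. This buys you two things the paper's route lacks: the argument is pure verification (no stationary-point hunt, no local-versus-global minimum step), and it identifies the equality set $\{g=0\}\cap\Delta^2$ exactly as the centre plus the three vertices, which makes your corner analysis --- $\frac{d(xyz)}{dt}=0$ there but $P(1,0,0)=(-k,k/2,k/2)\neq 0$, so trajectories leave immediately --- cleaner than the paper's brief appeal to ``inspection of the vector field'' (indeed the paper's boundary discussion contains sign slips, asserting the derivative is ``negative'' on the boundary and that $xyz$ is ``strictly decreasing'' from the corners, where positive and increasing are meant, as your computation confirms). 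What the paper's route buys in exchange is that it is mechanical once the derivative is in hand, whereas yours hinges on guessing the decomposition. One point to make explicit in your final step: a trajectory hitting the centre is constant by uniqueness of solutions (the field is polynomial, hence Lipschitz), and a non-constant trajectory cannot revisit a corner --- it would be periodic, forcing $xyz$ constant along the orbit and hence the orbit inside the finite set $\{g=0\}$, a contradiction --- which is what justifies your claim that the zero set is met at a discrete set of times.
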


\begin{proof}
We have $\frac{d(xyz)}{dt} = xyP_3 + xzP_2 + yzP_1$. Substituting in the components above gives:

$$\frac{d(xyz)}{dt} = \frac{xyz}{2}(y-x) + {x^2}y(y+z)k - xyz(2y+z)k + \frac{1}{2}xy({x^2}+{y^2})k $$
$$+\frac{xyz}{2}(x-z) + x{z^2}(x+y)k - xyz(2x+y)k + \frac{1}{2}xz({x^2}+{z^2})k $$
$$+\frac{xyz}{2}(z-y) + {y^2}z(x+z)k - xyz(x+2z)k + \frac{1}{2}yz({y^2}+{z^2})k.$$

This reduces to
$$\frac{d(xyz)}{dt} = \frac{k}{2}(x^2(1-x) + y^2(1-y) + z^2(1-z) - 6xyz).$$

Indeed, at $(x,y,z) = (1/3,1/3,1/3)$, $\frac{d(xyz)}{dt} = \frac{1}{3}*\frac{2}{3} - \frac{6}{27} = 0$, and when $(x,y,z) = (1,0,0)$, $(0,1,0)$ or $(0,0,1)$, $\frac{d(xyz)}{dt} = 0$.

Now, using that $z = 1 - x - y$, we can write
$$\frac{d(xyz)}{dt} \propto x^2 - x^3 + y^2 - y^3 + (1-x-y)^2 - (1-x-y)^3 - 6xy(1-x-y)$$
which reduces to
$$\frac{d(xyz)}{dt} \propto x - x^2 + y - y^2 - 10xy + 9x^2y + 9xy^2.$$

Define $f = x - x^2 + y - y^2 - 10xy + 9x^2y + 9xy^2$.  We can classify the stationary points of $f$, and since it is the derivative multiplied by a constant, it will retain the signs of the derivative (and all the behaviour regarding being positive, negative or zero).

Its partial derivative with respect to $x$ is $\frac{\partial f}{\partial x} = (9y - 1)(2x+y-1)$. Therefore, at all stationary points, either $y = 1/9$ or $2x+y=1$. 

Similarly, $\frac{\partial f}{\partial y} = (9x - 1)(x+2y-1)$ and so at all stationary points, either $x = 1/9$ or $x+2y=1$. From this we get all the stationary points of $f$ in the form $(x,y,z)$: they are $(1/3,1/3,1/3)$, $(1/9,1/9,7/9)$, $(1/9,7/9,1/9)$ and $(7/9,1/9,1/9)$.

Now, these stationary points are to be classified. We calculate the second partial derivatives as $\frac{\partial^2f}{\partial x^2} = 18y - 2$, $\frac{\partial^2f}{\partial x \partial y} = 18(x+y) - 10$ and $\frac{\partial^2f}{\partial y^2} = 18x - 2$ and define these as $A$, $B$ and $C$ respectively. Then, for the stationary point $(1/3,1/3,1/3)$, the values of the derivatives are $A=4$, $B=2$ and $C=4$, and so $A>0$ and $AC > B^2$. This means the point is a local minimum. For all other stationary points, the value of $AC$ is zero and of $B^2$ is $36$, and so $B^2 > AC$. This means those points are saddle points.

The only way that the local minimum at $(1/3,1/3,1/3)$ could not be a global minimum on the simplex $\Delta^2$ is if the value of the function is lower than zero on the boundary of $\Delta^2$, as here the point may not be a local minimum due to the behaviour outside of the simplex. However, on the boundary, at least one of $x$, $y$ and $z$ are zero. And thus, $\frac{d(xyz)}{dt}$ is non-negative, because $-6xyz = 0$ and all other parts of the function are never negative for $x,y,z \leq 1$.  In fact, $\frac{d(xyz)}{dt}$ is negative on the boundary except at the corner points, and at these points inspection of the vector field shows that trajectories started there are also strictly decreasing.

In conclusion, the only minimum point is $(1/3,1/3,1/3)$, at which the value of $\frac{d(xyz)}{dt}$ is zero. It is positive everywhere else in the interior, since there are no other minimum points, and $xyz$ is also decreasing on trajectories started on the boundary.  In other words, the product $xyz$ is increasing on the trajectories of $P$, except at $(1/3,1/3,1/3)$ where it is constant, as required.
\end{proof}

\begin{proof}[Proof of Theorem \ref{perturb_thm}.]
Proposition \ref{perturb_prop} shows that $\Lambda(x,y,z)=-xyz$ is a Lyapunov function as defined in \cite{Pemantle} for the vector field $P$ (since it is decreasing, as $xyz$ is increasing). Hence, by Proposition 2.18 of \cite{Pemantle} this process must converge to a stationary point of $P$, and the only such point is $(1/3, 1/3, 1/3)$. This proves Theorem \ref{perturb_thm}.
\end{proof}

\section{Knockout tournament case with $m=4$}
In this section, we will consider a new model which is a version of the rock-paper-scissors model of \cite{hj} but with $m=4$, so that each new node is connected to four existing nodes.  In our model, these four nodes then perform a knockout tournament, again following rock-paper-scissors rules, to decide the type of the new node. Specifically, the four nodes are paired off into two matchings, and the winner of each matching competes in the final.

Simulation results suggest that this case, despite that it may seem to have similar properties to the original rock-paper-scissors setup, converges. See Figure 2 for results from a $10^9$ step simulation; the proportions of each type settle around $1/3$ quite quickly.

\begin{figure}
  \includegraphics[scale=0.7]{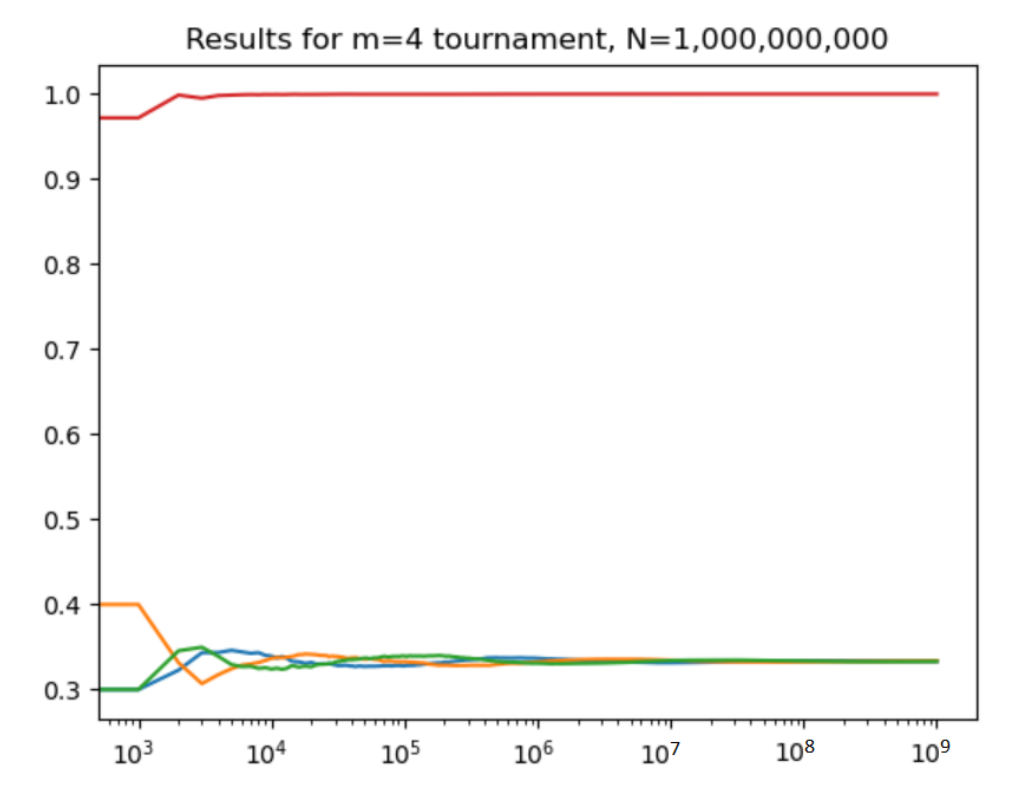}
  \caption{Results for a simulation of the tournament model with $m=4$. As before, here $G_0$ = a complete graph with three of each type. The red line shows the value of the product $27X_nY_nZ_n$. }
  \label{fig:2}
\end{figure}

We will prove the following theorem:

\begin{theorem}\label{tournament_thm}
Assume that each type is present in the initial graph $G_0$.  Then, for the $m=4$ tournament model, $(A_n, B_n, C_n)$ and $(X_n, Y_n, Z_n)$ converge almost surely to $(1/3,1/3,1/3)$, where $X_n, Y_n, Z_n$ denote the (normalised) sums of degrees of the nodes of types 1, 2 and 3 respectively in $G_n$.
\end{theorem}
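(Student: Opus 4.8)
The plan is to follow the strategy of the perturbation case, exhibiting $\Lambda(x,y,z)=-xyz$ as a Lyapunov function for the vector field $P$ of the tournament model and then invoking the stochastic approximation results of \cite{Pemantle}. First I would compute $P$ from the AMR framework. Since the four neighbours are chosen independently with type-distribution equal to the degree proportions $(x,y,z)$, I would first record the probability that the winner of a \emph{single} rock-paper-scissors match is of each type,
$$a = x^2 + 2xz, \qquad b = y^2 + 2xy, \qquad c = z^2 + 2yz,$$
which sum to $(x+y+z)^2=1$, and then observe that the final is an independent match between two match-winners, so the probability $q_1$ that the new node is type $1$ is obtained by feeding $(a,b,c)$ back into the same rule, $q_1 = a^2 + 2ac$, and cyclically for $q_2,q_3$. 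Exactly as in the perturbation computation one checks $P_i(\mathbf{x}) = \tfrac12(q_i - x_i)$, so here $P$ is a \emph{quartic} vector field.

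Next I would compute the derivative of $xyz$ along trajectories,
$$\frac{d(xyz)}{dt} = yzP_1 + xzP_2 + xyP_3 = \tfrac12\bigl(yz\,q_1 + xz\,q_2 + xy\,q_3 - 3xyz\bigr),$$
and substitute $z = 1-x-y$ to obtain a polynomial $f(x,y)$ on the triangle whose sign must be controlled. The aim is to prove $f \ge 0$ with equality in the interior only at $(1/3,1/3,1/3)$, mirroring Proposition \ref{perturb_prop}; the cyclic symmetry $1\to2\to3$ of the rock-paper-scissors rule makes $f$ invariant under the corresponding cyclic action, which should reduce the casework. I expect this to be the main obstacle: because the $q_i$ are quartics, $f$ has degree six rather than the degree three of the perturbation case, so the partial derivatives will not factor as cleanly as $(9y-1)(2x+y-1)$ did, and locating and classifying all critical points (confirming $(1/3,1/3,1/3)$ is the unique interior minimum, with value $0$) is substantially heavier. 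If the critical-point route is unwieldy I would instead seek a direct non-negativity argument, writing $f$ as a positive combination of AM-GM/Schur-type terms, in analogy with the identity $\sum x^2(1-x) - 6xyz = \sum_{\mathrm{sym}} x^2y - 6xyz \ge 0$ underlying the perturbation case.

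A genuinely new feature, absent in the perturbation model, is the boundary. With $h=0$ there is no random-type injection, so each edge of $\Delta^2$ is invariant under $P$ (e.g. on $z=0$ one has $c=q_3=0$, hence $P_3=0$), and on each edge the two-type rock-paper-scissors dynamics flow from the loser's corner to the winner's corner; together these form a heteroclinic cycle through the three corners, each of which is now an equilibrium of $P$ (at $(1,0,0)$, $a=1$ and $b=c=0$ give $q=(1,0,0)$, so $P=0$). I would therefore show each corner is an unstable saddle: at $(1,0,0)$, type $2$ (which beats type $1$) invades along the edge $z=0$ while type $3$ dies out along $y=0$, and the increase of $xyz$ confirms the transverse interior direction is repelling. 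Ruling out convergence to these corners, and to the heteroclinic cycle, is the extra step relative to the perturbation proof.

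Finally I would assemble the conclusion. The product $M_n = X_nY_nZ_n$ is bounded above by $1/27$ by AM-GM, and by the drift computation it is, up to the martingale and asymptotically negligible remainder terms of the stochastic approximation, a submartingale, so it converges almost surely to some $M_\infty \le 1/27$; equivalently, by the Lyapunov argument of Proposition 2.18 of \cite{Pemantle} as used in the perturbation proof, the limit set lies in $\{\,\mathrm{centre}\,\}$ together with the boundary heteroclinic cycle. The strict positivity of $\frac{d(xyz)}{dt}$ away from $(1/3,1/3,1/3)$ in the interior, combined with the instability of the corners and the hypothesis that every type is present in $G_0$ (which keeps all coordinates positive and supplies noise in every direction, so that the standard non-convergence-to-unstable-equilibria results of \cite{Pemantle} apply), excludes the boundary and forces $M_\infty = 1/27$. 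Since $1/27$ is attained on $\Delta^2$ only at the centre, this gives $(X_n,Y_n,Z_n)\to(1/3,1/3,1/3)$ almost surely, and the same conclusion for $(A_n,B_n,C_n)$ follows, proving Theorem \ref{tournament_thm}.
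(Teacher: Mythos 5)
Your interior analysis is essentially the paper's: you correctly derive $P_i=\tfrac12(q_i-x_i)$ via the two-round tournament, compute $\frac{d(xyz)}{dt}=\tfrac12(yz\,q_1+xz\,q_2+xy\,q_3-3xyz)$, and aim to show this is nonnegative with interior equality only at the centre, which is exactly Proposition \ref{tournament_prop}. The degree-six obstacle you anticipate dissolves, though: the expression factors as $\frac{xyz}{2}$ times a \emph{cubic} $f$, and the paper's critical-point analysis of that cubic (after substituting $z=1-x-y$) is no heavier than in the perturbation case; you missed this factorisation, but that is an execution detail, not a flaw.

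The genuine gap is your exclusion of the corners. You invoke ``standard non-convergence-to-unstable-equilibria results of \cite{Pemantle}'', justified by the claim that all types being present in $G_0$ ``supplies noise in every direction''. Those results require the conditional noise in the unstable direction to be bounded below by a positive constant uniformly in a neighbourhood of the equilibrium, and at $(1,0,0)$ this fails: a new vertex can be of type $2$ only if at least one of its four neighbours is of type $2$ or $3$, an event of probability $O(Y_n+Z_n)$, so the noise in the unstable direction degenerates linearly as the corner is approached. No off-the-shelf theorem applies, and since the Lyapunov value $xyz=0$ is attained on the whole boundary, the submartingale argument alone cannot rule the corners out either. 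The paper closes this with an idea absent from your proposal: on the event $Y_n,Z_n\le\epsilon$ for $n\ge n_0$, it merges rock and scissors into a single ``red'' type and minorizes the probability that a new vertex is paper by that of a two-type process in the framework of \cite{amr} with $p_0=0$, $p_4=1$ and intermediate $p_i$ within $O(\epsilon)$ of the pure paper-beats-rock rule; Theorem 1.4 of \cite{amr} then shows this two-type process cannot converge to red domination when blue is initially present, so convergence to $(1,0,0)$ (and symmetrically to the other corners) has probability zero. Without that coupling, or some substitute handling the degenerate noise at the boundary, your exclusion of the corners --- and consequently of the heteroclinic cycle --- does not go through, and the proof is incomplete at precisely the step that distinguishes this model from the perturbation case.
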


There are many possible scenarios based on the initial matching. What happens in each case is detailed below. These will inform the formulas for the $p(\textbf{u})$.

\begin{itemize}
\item All four nodes are the same: the new node takes this type with probability one.
\item Two types of nodes are present: the new node takes the type of whichever would win a heads up contest, with probability one.
\item All three types of nodes are present: in this case, there is one type which is present twice, and the others are present once each. There are two possibilities. First, the two duplicates may be matched up in round one, and then the duplicated type will win (as it will face the type it beats in the final). This happens with probability $1/3$. Otherwise, the type that beats the duplicated type will win (as it will face the type it beats in both rounds). This happens with probability $2/3$. The final type, which loses to the duplicated type, cannot win.
\end{itemize}
From this understanding, we derive the following (we define type 1 to be "rock", type 2 to be "paper" and type 3 to be "scissors"):

$$p_{(4,0,0)} = p_{(3,0,1)} = p_{(2,0,2)} = p_{(1,0,3)} = (1,0,0),$$
$$p_{(0,4,0)} = p_{(1,3,0)} = p_{(2,2,0)} = p_{(3,1,0)} = (0,1,0),$$
$$p_{(0,0,4)} = p_{(0,1,3)} = p_{(0,2,2)} = p_{(0,3,1)} = (0,0,1),$$
$$p_{(2,1,1)} =  \left( \frac{1}{3},\frac{2}{3},0 \right), p_{(1,2,1)}=\left(0,\frac{1}{3},\frac{2}{3}\right), p_{(1,1,2)}=\left(\frac{2}{3},0,\frac{1}{3}\right)$$

The vector field (defined by (3.1) of \cite{amr}, as with the perturbation case) is given by the components
$$P_1(x,y,z) = \frac{x}{2}(-3x^2y+x^2z-3xy^2-2xyz+3xz^2-y^3-3y^2z+5yz^2+3z^3),$$
$$P_2(x,y,z) = \frac{y}{2}(-3y^2z+y^2x-3yz^2-2xyz+3yx^2-z^3-3z^2x+5zx^2+3x^3),$$
$$P_3(x,y,z) = \frac{z}{2}(-3z^2x+z^2y-3zx^2-2xyz+3zy^2-x^3-3x^2y+5xy^2+3y^3).$$

Our approach is now similar to that in the previous section: we will show that $\Lambda(x,y,z)=-xyz$ is a Lyapunov function for the vector field $P$, and thus deduce convergence of the underlying stochastic approximation process.

\begin{proposition}\label{tournament_prop}
The product $xyz$ is constant on the trajectories of $P$ only when either $(x,y,z) = (1/3,1/3,1/3)$ or at least one of $x, y, z$ is zero. Otherwise, it is increasing on said trajectories.
\end{proposition}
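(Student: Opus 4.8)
The plan is to follow the strategy of Proposition~\ref{perturb_prop}: compute the rate of change of the product along trajectories, $\frac{d(xyz)}{dt} = yzP_1 + xzP_2 + xyP_3$, and show it is nonnegative on $\Delta^2$, vanishing only at the centre and on the boundary. The first thing to exploit is that each component $P_i$ carries an explicit factor of its own variable, so every one of the three products $yzP_1$, $xzP_2$, $xyP_3$ contains $xyz$. Hence we may write $\frac{d(xyz)}{dt} = \frac{xyz}{2}\,g(x,y,z)$, where $g$ is the sum of the three cubic factors. This single observation already settles the boundary part of the claim: whenever one coordinate vanishes the derivative is zero, and since the same factorisation makes each face of $\Delta^2$ invariant under $P$ (on $\{x=0\}$ we have $\dot x = P_1 = 0$, and cyclically), the product $xyz$ stays identically zero on trajectories that start there. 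It therefore remains only to prove that $g>0$ throughout the open interior except at $(1/3,1/3,1/3)$.

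The second step is to put $g$ into a usable form on the simplex. After collecting terms and using $x+y+z=1$, I expect $g$ to separate into a symmetric and an antisymmetric piece, namely
$$g = (x-y)^2+(y-z)^2+(z-x)^2 + 3(x-y)(y-z)(z-x),$$
where the antisymmetric cubic is identified through the standard factorisation $(x-y)(y-z)(z-x) = x^2z - x^2y + xy^2 - y^2z + yz^2 - xz^2$. The symmetric part is manifestly nonnegative and vanishes only at the centre, so the heart of the matter is that this quadratic deviation must dominate the antisymmetric cubic, which can itself take either sign.

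Controlling that antisymmetric term is where I expect the real work to be. I would pass to centred coordinates $a=x-\tfrac13$, $b=y-\tfrac13$, $c=z-\tfrac13$ with $a+b+c=0$, in which $g = 3\bigl(s_2 + (a-b)(b-c)(c-a)\bigr)$ where $s_2 = a^2+b^2+c^2$. The key estimate is then the discriminant identity for a zero-sum triple,
$$[(a-b)(b-c)(c-a)]^2 = \tfrac12 s_2^3 - 27(abc)^2 \le \tfrac12 s_2^3,$$
which yields $|(a-b)(b-c)(c-a)| \le s_2\sqrt{s_2/2}$ and hence $g \ge 3s_2\bigl(1-\sqrt{s_2/2}\bigr)$. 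To finish, note that $s_2 = x^2+y^2+z^2 - \tfrac13 \le \tfrac23$ on $\Delta^2$, since $x^2+y^2+z^2 \le x+y+z = 1$ there; thus $\sqrt{s_2/2}\le 1/\sqrt3 < 1$, forcing $g>0$ whenever $s_2>0$, that is, away from the centre. Combined with the boundary observation, this is exactly the assertion. A more computational alternative, paralleling Proposition~\ref{perturb_prop} more literally, would substitute $z=1-x-y$ and classify the critical points of $g$ directly, but I expect the discriminant bound to be cleaner and to avoid the accompanying case analysis.
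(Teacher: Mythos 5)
Your proposal is correct, and it reaches the proposition by a genuinely different route from the paper. The two arguments start identically: each $P_i$ carries a factor of its own coordinate, so $\frac{d(xyz)}{dt}=\frac{xyz}{2}f$ with $f=3(x^2z+xy^2+yz^2)-3(x^2y+y^2z+xz^2)-6xyz+2(x^3+y^3+z^3)$, which settles the boundary case at once (your remark that each face is invariant under $P$ is a small extra that the paper does not spell out). From there the paper proceeds by calculus: it substitutes $z=1-x-y$, locates the stationary points of the resulting two-variable polynomial via its partial derivatives (a step involving a somewhat delicate square-root manipulation), applies the second-derivative test to identify $(1/3,1/3,1/3)$ as a local minimum with value zero, and then argues this minimum is global by appeal to the edges of the triangle. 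Your route is purely algebraic, and I have checked that its two key identities hold exactly: since $3(x^2z+xy^2+yz^2-x^2y-y^2z-xz^2)=3(x-y)(y-z)(z-x)$ and $2(x^3+y^3+z^3-3xyz)=2(x+y+z)(x^2+y^2+z^2-xy-yz-zx)$, on the simplex $x+y+z=1$ one gets precisely your decomposition $f=(x-y)^2+(y-z)^2+(z-x)^2+3(x-y)(y-z)(z-x)$, which in centred coordinates reads $f=3\bigl(s_2+(a-b)(b-c)(c-a)\bigr)$; and your discriminant bound is the correct identity for the depressed cubic $t^3+pt+q$ with roots $a,b,c$, where $p=-s_2/2$ and $q=-abc$ give $(a-b)^2(b-c)^2(c-a)^2=-4p^3-27q^2=\tfrac12 s_2^3-27(abc)^2\le\tfrac12 s_2^3$. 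Combined with $s_2=x^2+y^2+z^2-\tfrac13\le\tfrac23$ on $\Delta^2$, this yields $f\ge 3s_2\bigl(1-\sqrt{s_2/2}\bigr)\ge 3\bigl(1-1/\sqrt{3}\bigr)s_2>0$ away from the centre, as you claim. Your approach buys several things the paper's does not: it avoids the critical-point classification and second-derivative test entirely, it gives an explicit quantitative lower bound on $f$ in terms of the squared distance to the centre, and it proves strict positivity of $f$ on the whole closed simplex minus the centre, including the boundary — which makes the paper's slightly loose global-minimum step (a local-minimum computation plus an edge argument phrased in terms of $\frac{d(xyz)}{dt}$ rather than $f$ itself) unnecessary. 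What the paper's version buys in exchange is uniformity of method: it parallels the proof of Proposition \ref{perturb_prop} step for step and uses nothing beyond elementary multivariable calculus.
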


\begin{proof}
We have $\frac{d(xyz)}{dt} = xyP_3 + xzP_2 + yzP_1$. Substituting in the components above gives:
$$\frac{d(xyz)}{dt} = \frac{xyz}{2}(3x^2z+3xy^2+3yz^2-3x^2y-3y^2z-3xz^2$$
$$-6xyz+2x^3+2y^3+2z^3).$$

To find the zeroes of this function, we first consider the zeroes of $\frac{xyz}{2}$: they are precisely when one or more of $x,y,z$ is zero.

For all other cases, $\frac{xyz}{2}$ is non-zero, and we define $f = 3x^2z+3xy^2+3yz^2-3x^2y-3y^2z-3xz^2-6xyz+2x^3+2y^3+2z^3$, which has the same zeroes as $\frac{d(xyz)}{dt}$ away from the edges of the triangle. Since $x+y+z=1$, $z=1-x-y$. Substituting $z=1-x-y$ and expanding gives
$$f = 2 - 9x - 3y + 15x^2 + 6xy - 3y^2 - 6x^3 - 9x^2y + 9xy^2 + 6y^3.$$

The partial derivatives of $f$ are $\frac{\partial f}{\partial x} = 3(-3+10x+2y-6x^2-6xy+3y^2)$ and $\frac{\partial f}{\partial y} = 3(-1+2x-2y-3x^2+6xy+6y^2)$. Re-arranging the first, we obtain that $y = \frac{1}{3}(\sqrt{27x^2-36x+10}+3x-1)$. Substituting this into $\frac{\partial f}{\partial y}$ gives that, at stationary points, $(3x-1)(2\sqrt{27x^2-36x+10}+9x-7)=0$. This implies that either $x = 1/3$ or $x = -1/3$. Since here $x$ is always non-negative, the only relevant solution is $x = 1/3$. Symmetrical reasoning implies that $y = 1/3$ and $z = 1/3$ are satisfied at any zero of $f$. Hence, our only stationary point from $f$ is $(1/3,1/3,1/3)$.

Calculating second derivatives gives $\frac{\partial^2 f}{\partial x^2} = 6(5-6x-3y)$, $\frac{\partial^2 f}{\partial y^2} = 6(-1+3x+6y)$ and $\frac{\partial^2 f}{\partial x \partial y} = 6(1-3x+3y)$. At $(1/3,1/3,1/3)$, these are $12$, $12$ and $6$ respectively, and since $12$ is positive and $12 * 12 > 6^2$, we have that $(1/3,1/3,1/3)$ is a local minimum.

The only way this local minimum is not a global minimum is if the value of the function is negative somewhere on the edges of the triangle. But, $\frac{xyz}{2}$ is zero at all these points and so $\frac{d(xyz)}{dt}$ is zero. Hence, $(1/3,1/3,1/3)$ is a global minimum.

\end{proof}

We are now able to prove Theorem \ref{tournament_thm}, in a similar vein to in section \ref{perturb_sect}.
\begin{proof}[Proof of Theorem \ref{tournament_thm}.]
Proposition \ref{tournament_prop} shows that $\Lambda(x,y,z)=-xyz$ is a Lyapunov function for the vector field $P$. Hence, by Proposition 2.18 of \cite{Pemantle}, this process must converge almost surely to a stationary point of $P$.

It remains to check that the convergence must be to $(1/3,1/3,1/3)$.  Proposition \ref{tournament_prop} shows there are no other stationary points in the interior of $\Delta^2$, and straightforward analysis of $P$ on the boundary of $\Delta^2$ shows that the only other stationary points are the corners $(1,0,0)$, $(0,1,0)$ and $(0,0,1)$, each of which is a linearly unstable saddle point.

To show that $(1,0,0)$ is a limit with probability zero, assume that for some $\epsilon>0$ and for $n\geq n_0$ we have $Y_n,Z_n\leq \epsilon$, and consider the following coupling to a two type process.  Merge the rock and scissors types as ``red'', and consider the paper type as ``blue''.  Then, conditional on observing three red neighbours and one blue neighbour, for large $n$ the probability that all three red neighbours are in fact rock is at least $1-6\epsilon$ when $n\geq n_0$, and in this case the new vertex will be paper.  Similarly if there are two red neighbours the probability the new vertex is paper is at least $1-4\epsilon$, and if there is one red neighbour this probability is at least $1-2\epsilon$.  Hence, for $n\geq n_0$ the probability the new vertex is paper is at least as large as that in a two type process with, in the notation of \cite{amr}, $p_0=0,p_1=2\epsilon, p_2=4\epsilon, p_3=6\epsilon, p_4=1$.  For $\epsilon$ sufficiently small Theorem 1.4 of \cite{amr} shows that this two type process does not have positive probability of convergence to red domination as long as some blue vertices are present initially, and so convergence to $(1,0,0)$ cannot have positive probability in our model.  Analogous arguments apply to $(0,1,0)$ and $(0,0,1)$.
\end{proof}

\section*{Acknowledgement}
The authors acknowledge the support of the Undergraduate Research Internship programme in the School of Mathematics and Statistics at the University of Sheffield, funded from a bequest from Chris Cannings.

\end{document}